\documentclass{amsart}
\usepackage{fullpage}

\usepackage{amsfonts}
\usepackage{amsmath, amscd, amssymb}

\newtheorem{theorem}{Theorem}

\newtheorem{lemma}[theorem]{Lemma}

\theoremstyle{remark}

\title{On the connectivity of finite subset spaces}
\author{J.~Mostovoy}
\author{R.~Sadykov}

\subjclass[2000]{Primary: 55P65}
\keywords{Finite subset spaces, Tuffley conjecture}

\begin{document}
\begin{abstract} 
We prove that the space $\exp_k \vee S^{m+1}$ of non-empty subsets of cardinality at most $k$ in a bouquet of $m+1$-dimensional spheres  is $(m+k-2)$-connected. This, as shown by Tuffley, implies that  the  space $\exp_k X$ is $(m+k-2)$-connected for any $m$-connected cell complex $X$.
\end{abstract}
\maketitle

\section{Introduction and the statement of the result}

The $k$-th finite subset space $\exp_k X$ of a topological space $X$ is the space of nonempty subsets of $X$ of cardinality at most $k$. The topology is taken to be the quotient topology with respect to the map $X^{k}\to \exp_k X$ that sends $(x_1,\ldots, x_k)$ to the subset $\{x_1\}\cup \ldots\cup\{ x_k\}\subset X$.  The space $\exp_k X$ can be interpreted as the space of faces of dimension at most $k$ of the infinite-dimensional simplex whose space of vertices is $X$. It is easy to see that $\exp_k$ is a homotopy functor.

The finite subset space functor was introduced by Borsuk and Ulam \cite{BU} as a means to construct examples of topological spaces with interesting properties. This construction can produce rather non-trivial results even in the simplest cases \cite{B, Bt, M}.  Finite subset spaces turned out to be of importance in various problems of geometry and topology, see, for instance, \cite{BD} and \cite{G}. 

The most important property of the finite subset spaces is the fact that the colimit $$\exp X = \cup \exp_k X$$ has trivial homotopy groups. This, essentially, follows from the fact that $\exp X$ is a topological monoid with an idempotent operation (see page 172 of \cite{BD} for a brief proof). In contrast to $\exp X$, the spaces $\exp_k X$ are not necessarily contractible. In this note we prove a lower bound on the connectivity of $\exp_k X$.

\begin{theorem}\label{main}
 If $X$ is an $m$-connected cell complex,  then the finite subset space $\exp_k X$ is $(k+m-2)$-connected. 
\end{theorem}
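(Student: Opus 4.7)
The abstract signals a two-step strategy: reduce via Tuffley's theorem to the case $X = \bigvee S^{m+1}$, then establish $(k+m-2)$-connectivity for this special case. I would therefore focus the proof on showing, for $Y := \bigvee_\alpha S^{m+1}_\alpha$, that $\exp_k Y$ is $(k+m-2)$-connected, proceeding by induction on $k$. The base case $k = 1$ is immediate since $\exp_1 Y = Y$ is $m$-connected by hypothesis.

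For the inductive step, I would equip $Y$ with its minimal CW structure---one $0$-cell $*$ together with one $(m+1)$-cell $e_\alpha$ per sphere---and use the induced stratification of $\exp_k Y$. Strata are labelled by tuples $(n_0; n_1, \ldots, n_N)$ with $n_0 \in \{0,1\}$ and each $n_\alpha \geq 0$; the $(n_0; \vec{n})$-stratum consists of subsets $A$ with $n_0 = |A \cap \{*\}|$ and $n_\alpha = |A \cap e_\alpha^\circ|$. The strata lying in $\exp_k Y \setminus \exp_{k-1} Y$ are exactly those with $n_0 + \sum n_\alpha = k$, and the minimum-dimensional such stratum has real dimension $(k-1)(m+1)$, realized when $n_0 = 1$ and each $n_\alpha \in \{0, 1\}$. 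Since $(k-1)(m+1) - (k+m-1) = m(k-2) \geq 0$ for $k \geq 2$, this provides a candidate cellular lower bound on the connectivity of the pair $(\exp_k Y, \exp_{k-1} Y)$.

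The main obstacle is that the strata are configuration spaces $\prod_\alpha \mathrm{UConf}_{n_\alpha}(\R^{m+1})$ rather than honest CW cells, so a naive cellular count may undershoot the actual connectivity: the attaching data between strata can cause low-dimensional relative cells to represent classes already present in $\exp_{k-1} Y$ rather than producing genuinely new obstructions. A telltale low-dimensional example is $\exp_2 S^1 \simeq$ M\"obius band, whose unique $1$-dimensional extra stratum recreates the $\pi_1$ of $\exp_1 S^1 = S^1$ without contributing new homotopy. To circumvent this, I would pair the cellular analysis with a geometric null-homotopy construction: for each map $f \colon S^i \to \exp_{k-1} Y$ with $i \leq k + m - 2$, produce a null-homotopy in $\exp_k Y$ by exploiting the extra slot available in size-$k$ subsets---for instance, by sweeping an auxiliary point along a path in a single wedge summand while keeping the rest of the subset fixed. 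The precise bound $k + m - 2$ should then fall out of a combinatorial balance between the $k$ available slots and the $m$-connectivity of each summand; once such null-homotopies are in place, the long exact sequence of the pair together with the inductive hypothesis closes the induction.
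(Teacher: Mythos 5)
Your opening reduction to $Y=\bigvee S^{m+1}$ is exactly the paper's first step, but the inductive step on $k$ --- which is where the entire content of the theorem lies --- is not carried out, and both of its ingredients are unsubstantiated. To run the long exact sequence of the pair $(\exp_k Y,\exp_{k-1}Y)$ you need (a) the pair to be $(k+m-2)$-connected and (b) the map $\pi_{k+m-2}(\exp_{k-1}Y)\to\pi_{k+m-2}(\exp_k Y)$ to vanish. For (a), the dimension count of the strata of $\exp_kY\setminus\exp_{k-1}Y$ does not bound the dimensions of the relative cells of any CW structure from below: the strata are unordered configuration spaces whose closures meet $\exp_{k-1}Y$ in a complicated way, and producing a CW pair with all relative cells in dimensions $\ge (k-1)(m+1)$ is precisely the hard technical work that Kallel--Sjerve and Taamallah carried out only for $k=3$ and $k=4$; nothing in your sketch supplies it for general $k$. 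For (b), ``sweeping an auxiliary point along a path'' produces a homotopy from $f$ to the map $A\mapsto A\cup\{\gamma(1)\}$, not a null-homotopy, and the inclusion $\exp_{k-1}Y\hookrightarrow\exp_kY$ is genuinely non-trivial in general (e.g.\ $\exp_1S^1\to\exp_2S^1$ is multiplication by $2$ on $H_1$), so any vanishing argument must use the bound $i\le k+m-2$ and the $m$-connectivity of the wedge summands quantitatively. Your sketch acknowledges these obstacles but does not overcome either one, so as written this is a plan rather than a proof.

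For comparison, the paper's argument avoids the stratification entirely and inducts on the sphere dimension rather than on $k$, with Tuffley's graph case as the base. It covers $\exp_k\vee S^{n+1}$ by the $k+1$ open sets $U_i=\exp_k(\vee S^{n+1}-P_i)$, where $P_1,\dots,P_{k+1}$ are disjoint puncture sets with one point in each sphere: a subset of cardinality at most $k$ must miss some $P_i$, each $U_i$ is contractible because $\exp_k$ is a homotopy functor and $\vee S^{n+1}-P_i$ is contractible, and every multiple intersection is homotopy equivalent to $\exp_k\vee S^{n}$, hence $(n+k-3)$-connected by induction. A Mayer--Vietoris lemma then yields vanishing reduced homology of $\exp_k\vee S^{n+1}$ below degree $n+k-1$, and simple connectivity for $k>2$ together with the Hurewicz theorem finishes. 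If you want to salvage your approach, you would essentially be reproving the Kallel--Sjerve computations in all degrees; redirecting the induction onto $m$ with this covering trick is what makes the general case tractable.
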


This was proved for $m = 0$ and $1$ and conjectured for $m \geq 2$ by Tuffley \cite{T2}.

Theorem~\ref{main} has also been established by Kallel and Sjerve \cite{KS1} in the case $k=3$, by F\'elix and Tanr\'e \cite{FT} rationally for k=3 and 4, and by Taamallah \cite{T} for $k=4$.

\section{The proof}

It has been shown by Tuffley~\cite{T2} that it is sufficient to prove Theorem~\ref{main} for finite bouquets of $(m+1)$-dimensional spheres. He also established it for bouquets of circles. Namely, he showed \cite{T1} that for a connected graph $\Gamma$ the space $\exp_k \Gamma$ 
has vanishing reduced homology groups in degrees different from $k-1$ and $k$. Since $\exp_k X$ is  simply connected for all connected $X$ and $k>2$
(see \cite[ Theorem~1]{T2} or \cite[Corollary~2.2]{KS1}), this implies that  $\exp_k\Gamma$ is ($k-2$)-connected. 

The proof for the bouquets of spheres of any given dimension is by induction on the dimension of the spheres, the base being the case of the bouquets of circles proved by Tuffley. Our argument is very close to that of Tuffley and uses the following simple lemma 
(which is a stronger version of Lemma~1 of \cite{T2}):
\begin{lemma}\label{l:1} Let $Y$ be a union of open sets $U_1, \dots, U_r$
such that 
\begin{itemize}
\item $U_1\cap \cdots \cap U_r$ is nonempty, 
\item each intersection $U_{i_1...i_s}=
U_{i_1}\cap \cdots \cap U_{i_s}$ has vanishing reduced homology in dimensions less than $j$, and
\item each $U_i$ has vanishing reduced homology in dimensions less than $j+1$. 
\end{itemize}
Then $Y$ has vanishing reduced homology in dimensions less than $j+1$. 
\end{lemma}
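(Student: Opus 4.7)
The plan is to argue by induction on $r$, the number of open sets in the cover. The base case $r=1$ is immediate since $Y=U_1$ already satisfies the required vanishing.

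For the inductive step I would set $V = U_1 \cup \cdots \cup U_{r-1}$ and decompose $Y = V \cup U_r$. The open sets $U_1,\dots,U_{r-1}$ inherit the hypotheses of the lemma (their total intersection still contains $U_1 \cap \cdots \cap U_r \ne \emptyset$), so by the inductive hypothesis applied with the same parameter $j$, the set $V$ has vanishing reduced homology below dimension $j+1$.

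The crux of the argument is that $V \cap U_r = \bigcup_{i<r}(U_i \cap U_r)$ is itself covered by $r-1$ open sets, and these satisfy the hypotheses of the lemma with $j$ replaced by $j-1$. Indeed, the total intersection equals $U_1 \cap \cdots \cap U_r$, which is nonempty; each $s$-fold intersection is an $(s+1)$-fold intersection of the original $U_i$'s and therefore has vanishing reduced homology below $j$, a fortiori below $j-1$; and each single set $U_i \cap U_r$ is a two-fold intersection of the original $U_i$'s, hence has vanishing reduced homology below $j = (j-1)+1$. Applying the inductive hypothesis yields that $V \cap U_r$ has vanishing reduced homology below dimension $j$.

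The conclusion then follows from the Mayer--Vietoris sequence for the open cover $\{V, U_r\}$ of $Y$:
$$\cdots \to \tilde H_n(V \cap U_r) \to \tilde H_n(V) \oplus \tilde H_n(U_r) \to \tilde H_n(Y) \to \tilde H_{n-1}(V \cap U_r) \to \cdots$$
For $n \le j$ both flanking groups vanish, forcing $\tilde H_n(Y)=0$ as required. The main subtlety I anticipate is bookkeeping: one must check that the parameter drop from $j$ to $j-1$ on the cover of $V \cap U_r$ meshes correctly with the degree shift coming from the connecting homomorphism, and that the low-dimensional edge cases (notably $j=0$, where the hypotheses on intersections become vacuous) do not break the induction.
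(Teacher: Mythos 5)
Your proof is correct, and it rests on the same mechanism as the paper's: Mayer--Vietoris plus induction on the number $r$ of sets in the cover. The organization differs slightly, and the difference is worth noting. The paper first runs Mayer--Vietoris on $U_1\cup U_2$ to show that this union is acyclic below $j+1$, then replaces the original cover by the $(r-1)$-element cover $\{U_1\cup U_2, U_3,\dots,U_r\}$ and recurses at the same level $j$. You instead peel off the last set, writing $Y=V\cup U_r$ with $V=U_1\cup\dots\cup U_{r-1}$, and invoke the inductive hypothesis twice: once at level $j$ for $V$, and once at level $j-1$ for the cover of $V\cap U_r$ by the sets $U_i\cap U_r$. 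The payoff of your arrangement is that it makes explicit the one point the paper leaves implicit: for the paper's recursion to go through, one must check that intersections in the new cover, such as $(U_1\cup U_2)\cap U_3=U_{13}\cup U_{23}$, are still acyclic below $j$, and that check is precisely an application of the two-set case with the parameter dropped to $j-1$ --- the same parameter shift you isolate and handle via the cover of $V\cap U_r$. Two small things deserve a sentence in your write-up, though neither is a gap: the induction on $r$ must carry the statement quantified over all $j$, since you apply the hypothesis at both $j$ and $j-1$; and $V\cap U_r\supseteq U_1\cap\dots\cap U_r$ is nonempty, which is what legitimizes the reduced Mayer--Vietoris sequence (and settles your worry about the $j=0$ edge case, where the claim about $V\cap U_r$ is vacuous and exactness at $\widetilde{H}_0$ still holds because the intersection is nonempty).
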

\begin{proof}
Observe that $U_{1}\cup U_2$ has vanishing reduced homology in dimensions less than 
$j+1$. Indeed, this immediately follows from the Mayer-Vietoris sequence 
\[
    \ldots\longrightarrow\widetilde{H}_{i+1}(U_{1})\oplus \widetilde{H}_{i+1}(U_2) \longrightarrow \widetilde{H}_{i+1}(U_{1}\cup U_2) \longrightarrow 
    \widetilde{H}_i(U_{12}) \longrightarrow \ldots 
\]
Next we apply the same argument to the open cover $\{V_i\}$ of $Y$, where
 $V_1=U_1\cup U_2$, $V_2=U_3$, ..., $V_{r-1}=U_r$, 
and prove the lemma by induction. 
\end{proof}

Assume that Theorem~\ref{main} has been established for all $k$ and $m\le n$. Let us prove it for all $k$ and $m=n+1$. 
As mentioned before, it suffices to consider the case of finite subset spaces $\exp_k \vee S^{n+1}$.

Let $P_1, ..., P_{k+1}$ be disjoint subsets of a finite bouquet $\vee S^{n+1}$ of $n+1$-dimensional spheres such that each $P_i$ 
has exactly one point in each sphere, and the common point of the spheres does not belong to any of the sets $P_i$. Then the open sets
\[
   U_i = \exp_k(\vee S^{n+1}-P_i)
\] 
cover $\exp_k\vee S^{n+1}$. Since $\exp_k$ is a homotopy functor, it follows that each $U_i$ is contractible and the intersections
\[
  U_{i_1...i_s}=\exp_k(\vee S^{n+1}-P_{i_1}-\cdots - P_{i_s})\simeq
  \exp_k(\vee S^{n})
\]
are  $(n+k-3)$-connected by the induction assumption. By Lemma~\ref{l:1}, we deduce that 
$\exp_k\vee S^{n+1}$ is  $(n+k-2)$-connected. 

\section{Some remarks}

The space $\exp X$ can  be thought of as a space of particles with summable labels (such as those in \cite{K}), with the monoid of labels being the set $\{0,\infty\}$ endowed with the natural addition. There is a whole family of monoids interpolating between this monoid and the natural numbers: take the set $\{0,1,2,\ldots,n, \infty\}$ with the operation being the usual sum unless the result is greater than $n$, in which case it is taken to be $\infty$. Particle spaces with labels in these monoids are also readily seen to be contractible.

\medskip

 The functor $\exp_k$ is actually defined on sets. For a set $S$ the set $\exp_k S$ is a colimit of a diagram consisting of all cartesian products of at most $k$ copies of $S$ with arrows being (1) all possible products of diagonal maps with identity maps and (2) all permutations of the factors.  If $S$ is given a topology, these arrows are continuous maps and, therefore, $\exp_k S$ is a topological space. If $S$ is a simplicial set,  that is, a functor from $\Delta^{\rm op}$ to sets, $\exp_k S$ can be defined simply as the composition $\exp_k \circ S$. Note that since the geometric realization commutes both with the cartesian products and the colimits, for any simplicial set $S$ the geometric realization  $|\exp_k S|$ is homeomorphic to $\exp_k |S|$. This fact can be used to produce cell decompositions of the spaces $\exp_k X$, such as those in \cite{KS1}.  (See \cite{GJ} for basics on simplicial sets.)


\begin{thebibliography}{99}
\bibitem{BD} A.~Beilinson, V.~ Drinfeld, Chiral algebras, American Mathematical Society, Providence, RI, 2004.
\bibitem{B} K.~Borsuk, On the third symmetric potency of the circumference, Fundam.\ Math.\ 36  (1949), 236--244.
\bibitem{BU} K.~Borsuk, S.~Ulam, On symmetric products of topological spaces, Bull.\ Am.\ Math.\ Soc.\ {\bf 37} (1931), 875--882.
\bibitem{Bt} R.~Bott, On the third symmetric potency of $S_1$, Fundam.\ Math.\ {\bf 39}  (1953), 264--268. 
\bibitem{G} S.~Galatius, Stable homology of automorphism groups of free groups, Ann.\ of Math.\ (2) {\bf 173} (2011), 705--768.
\bibitem{GJ} P.~G.~Goerss, J.~F.~Jardine, Simplicial homotopy theory, Progress in Mathematics 174, Birkh\"auser, Basel, 1999.
\bibitem{FT}Y.~F\'elix and D.~Tanr\'e, 
Rational homotopy of symmetric products and spaces of finite subsets, Contemporary Mathematics {\bf 519} (2010), 77--92 .
\bibitem{K} S.~Kallel, Spaces of particles on manifolds and generalized Poincar\'e dualities, Quart.\ J.\ Math.\ {\bf 52} (2001), 45--70.
\bibitem{KS1} S.~Kallel, D.~Sjerve, Remarks on finite subset spaces,
Homology, Homotopy Appl.\ {\bf 11} (2009), 229--250. 
\bibitem{M} J.~Mostovoy, Lattices in $\Bbb C$ and finite subsets of a circle, Am.\ Math.\ Mon.\ {\bf 111}  (2004), 357--360.
\bibitem{T} W.~Taamallah, Connectivity and homology of finite subset spaces of cardinality at most four, Topology Appl.\ {\bf 158} (2011),  1699--1712.
\bibitem{T1}
C.~Tuffley, Finite subset spaces of graphs and punctured surfaces, 
Alg.\ and Geom.\ Topology {\bf 3} (2003), 873--904.
\bibitem{T2}
C.~Tuffley, Connectivity of finite subset spaces of cell complexes, 
Pacific Journal of Math.\ {\bf 217} (2004), 175--179.
\end{thebibliography}
\end{document}